\numberwithin{equation}{section} 
\theoremstyle{plain}
\newtheorem{thm}{Theorem}[section]
\newtheorem{lemma}[thm]{Lemma}
\theoremstyle{definition}
\newtheorem{rmk}[thm]{Remark}
\newcommand{\N}{\mathbb{N}}
\newcommand{\eps}{\varepsilon}
\DeclareMathOperator{\Div}{div}
\DeclareMathOperator{\ric}{Ric}
\DeclareMathOperator{\tr}{tr}
\def\XXint#1#2#3{{\setbox0=\hbox{$#1{#2#3}{\int}$ }
\vcenter{\hbox{$#2#3$ }}\kern-.6\wd0}}
\newcommand\hhat[1]{%
\savestack{\tmpbox}{\stretchto{%
  \scaleto{%
    \scalerel*[\widthof{\ensuremath{#1}}]{\kern.1pt\mathchar"0362\kern.1pt}%
    {\rule{0ex}{\textheight}}
  }{\textheight}%
}{2.4ex}}%
\stackon[-6.9pt]{#1}{\tmpbox}%
}
\title[Inverse Scattering for AH manifolds and the Conformal Laplacian]{On the interplay between inverse scattering for asymptotically hyperbolic manifolds and the Calderón problem for the Conformal Laplacian}
\author{Sebastián Muñoz-Thon}
\address{Universit\'e Paris-Saclay, Laboratoire de math\'ematiques d’Orsay, 91405, Orsay, France.}
\email{sebastian.munoz-thon@universite-paris-saclay.fr}
\begin{document}

\begin{abstract}
In this short note, we use the relation obtained by Guillarmou--Guillopé and Chang--González between the generalized eigenvalue problem for asymptotically hyperbolic (AH) manifolds and the Conformal Laplacian, to obtain a new inverse scattering result: on an AH manifold of dimension $n+1$ with constant scalar curvature $-n(n+1)$, we show that the scattering matrix at energy $\frac{n+1}{2}$ determines the jet of the metric on the boundary, up to a diffeomorphism and conformal factor.
\end{abstract}

\maketitle

\section{Introduction}

Let $\overline{X}$ be a compact manifold with boundary of dimension $n+1$. A smooth metric $g$ in the interior $X$ of $\overline{X}$ is \emph{conformally compact} (CC) if $\overline{g}=\rho^{2} g$ extends smoothly to a Riemannian metric in $\overline{X}$, where $\rho \in C^{\infty}(\overline{X},[0,\infty))$ is a boundary defining function for $\partial \overline{X}$ (i.e., $d\rho|_{\partial \overline{X}} \neq 0$ and $\{\rho=0\}=\partial \overline{X}$). The boundary $\partial \overline{X}$ equipped with the conformal class of $\overline{g}|_{T \partial \overline{M}}$ is called the \emph{conformal boundary} or \emph{conformal infinity} of $(\overline{X}, g)$. If $|d\rho|_{\overline{g}}=1$ at the boundary, $g$ is called \emph{asymptotically hyperbolic} (AH). A more restrictive condition is asking for the Einstein condition $\ric_{g}=-ng$. In this case, $g$ is called \emph{conformally compact Einstein} (CCE or \emph{Poincar\'e--Einstein}).

In these particular ``edge'' geometries (in the sense of \cite{Mazzeo91_ell}), we study the inverse scattering problem for the generalized eigenvalue problem
\begin{equation} \label{eq:eigen_prob}
    -\Delta_{g}u-s(n-s)u=0 \quad \text{in }X,
\end{equation}
where $s=\frac{n+1}{2}$. If $(\overline{X},g)$ is an AH manifold with constant scalar curvature $R_{g}\equiv -n(n+1)$, given $f \in C^{\infty}(\partial \overline{X})$ the problem \eqref{eq:eigen_prob} has a unique solution $u$ with $u|_{\partial \overline{X}}=f$ and the expansion
\[ u=\rho^{n-s}(f+O(\rho^{3}))+\rho^{s}\left( -S\left( \frac{n+1}{2} \right)+O(\rho)\right), \]
see \cites{GZ03, Guillarmou05, GG07} and Lemma \ref{lemma:asymp_ex} below. $S$ (sometimes written as $S_{g}$ to make clear the dependence on $g$) is the \emph{scattering matrix at energy $\frac{n+1}{2}$}, and depends on the choice of the boundary defining function (or equivalently, of the choice of the metric on the conformal infinity), although it can be defined without this dependence, see \cite{GZ03}*{Section 3}). Hereafter, we denote by $P_{s}$ the operator appearing in \eqref{eq:eigen_prob}.

As is mentioned in \cites{MM87, GZ03}, the scattering matrix can be considered as a version of the Dirichlet-to-Neumann (DN) for these geometries. It then makes sense to consider the analog of the Calder\'on problem to this context. In inverse problems, this means to determine the metric (or even the differential or topological structure) of a compact Riemannian manifold with boundary, from the DN map associated to the Laplace--Beltrami operator. On compact Riemannian manifolds, this is solved on surfaces and in the analytic category \cites{LU01}, and in \emph{conformally anisotropic geometries} \cite{DSFKSU09}. The Calder\'on problem in our context, known as the inverse scattering problem, asks if it is possible to recover information of the manifold $(\overline{X},g)$ from the knowledge of the scattering matrix $S_{g}(s)$, for certain fixed $s$. In this direction, our main result is the following:

\begin{thm} \label{thm:jet}
Let $(\overline{X}_{i},g_{i})$ be asymptotically hyperbolic manifolds of dimension $n+1$, with constant scalar curvature $R_{g_{i}} \equiv -n(n+1)$. Suppose that $\partial \overline{X}_{i}$ contain a common open set $\Gamma$ such that the identity map $id \colon \Gamma \subset \partial \overline{X}_{i} \to \Gamma \subset \overline{X}_{2}$ is a diffeomorphism. Assume that $\ker_{L^{2}}P_{\frac{n+1}{2}}^{i}=\emptyset$ for $i=1,2$. If for some boundary defining functions $\rho_{i}$, the scattering matrices at energy $\frac{n+1}{2}$ agree for the two metrics, i.e., $S_{g_{1}}(\frac{n+1}{2})=S_{g_{2}}(\frac{n+1}{2})$, then there exists $c\in C^{\infty}(\overline{X}_{1})$ such that: 
\begin{enumerate}
    \item when $n=1$, there exists a diffeomorphism $\psi \colon \overline{X}_{1} \to \overline{X}_{2}$ with $\psi|_{\Gamma}=id_{\Gamma}$ and such that $c\psi^* g_{2}=g_{1}$;
    \item when $n\geq 2$, there exists a diffeomorphism $\psi \colon U_{1} \to U_{2}$ between neighborhoods $U_{i}$ of $\Gamma$ in $\overline{X}_{i}$, with $\psi|_{\Gamma}=id_{\Gamma}$ and such that $c\psi^* g_{2}$ and $g_{1}$ coincide up to infinite order.
\end{enumerate}
\end{thm}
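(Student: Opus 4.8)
The plan is to turn the inverse scattering statement into a boundary‑determination statement for the Calderón problem of the Conformal Laplacian, and then invoke the known theory for the latter. First I would make the reduction precise. Write $m=n+1$, set $\overline{g}_i=\rho_i^{2}g_i$, and recall the Conformal (Yamabe) Laplacian $L_{\overline{g}_i}=-\Delta_{\overline{g}_i}+\frac{n-1}{4n}R_{\overline{g}_i}$, which is conformally covariant: for the conformal factor $\overline{g}_i=\rho_i^2 g_i$ one has $L_{g_i}u=\rho_i^{(n+3)/2}\,L_{\overline{g}_i}\!\big(\rho_i^{-(n-1)/2}u\big)$. At the distinguished energy $s=\frac{n+1}{2}$ one has $P_{\frac{n+1}{2}}=-\Delta_{g_i}-\frac{n^2-1}{4}$, and since $R_{g_i}\to-n(n+1)$ at $\partial\overline{X}_i$ the zeroth‑order part of $P_{\frac{n+1}{2}}$ agrees at the boundary with the curvature term of $L_{g_i}$; writing $U:=\rho_i^{-(n-1)/2}u$ for a solution $u$ of $P_{\frac{n+1}{2}}u=0$, the conjugation yields $(L_{\overline{g}_i}-V_i)U=0$ with a \emph{bounded} geometric potential $V_i=\frac{n-1}{4n}\rho_i^{-2}\big(R_{g_i}+n(n+1)\big)$ (which vanishes identically in the Poincaré--Einstein case, and is bounded here by the evenness of $g_i$ modulo $O(\rho^3)$). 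Feeding in the expansion of Lemma \ref{lemma:asymp_ex} gives $U=f+\rho_i\big(-S_{g_i}(\tfrac{n+1}{2})f\big)+O(\rho_i^{2})$, so the $\rho_i^0$‑ and $\rho_i^1$‑coefficients of $U$ are exactly the Dirichlet and (up to sign) the Neumann data for $L_{\overline{g}_i}-V_i$. This is the Guillarmou--Guillopé and Chang--González correspondence: the scattering matrix coincides, up to this normalization, with the Dirichlet‑to‑Neumann map $\Lambda_{\overline{g}_i}$ of the Conformal Laplacian. The hypothesis $\ker_{L^{2}}P_{\frac{n+1}{2}}^{i}=\emptyset$ guarantees $0$ is not a Dirichlet eigenvalue, so $\Lambda_{\overline{g}_i}$ is well defined, and $S_{g_1}(\tfrac{n+1}{2})=S_{g_2}(\tfrac{n+1}{2})$ reads $\Lambda_{\overline{g}_1}=\Lambda_{\overline{g}_2}$ as local DN maps on $\Gamma$.

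Next I would invoke the Calderón problem for the Conformal Laplacian. Since $L_{\overline{g}}$ is conformally covariant, $\Lambda_{\overline{g}}$ is invariant under boundary‑fixing diffeomorphisms and under conformal rescalings trivial on the boundary, so one can recover $[\overline{g}_i]$ only modulo this gauge. For $n\ge 2$ I would use boundary determination: the full symbol of the first‑order pseudodifferential operator $\Lambda_{\overline{g}_i}$ on $\Gamma$ determines, in boundary normal coordinates, the Taylor series at $\Gamma$ of the conformal class of $\overline{g}_i$ (the metric jet sits in the top‑order symbols, while $V_i$ is subleading and itself a function of that jet). Hence, after choosing compatible normal coordinates and a conformal gauge, there is a diffeomorphism $\psi$ between neighborhoods $U_i$ of $\Gamma$ with $\psi|_\Gamma=\mathrm{id}_\Gamma$ and a smooth positive $\omega$ such that $\psi^*\overline{g}_2=\omega\,\overline{g}_1$ to infinite order at $\Gamma$. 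For $n=1$ the manifolds are surfaces and $\frac{n-1}{4n}=0$, so $L_{\overline{g}}=-\Delta_{\overline{g}}$ degenerates to the Laplace--Beltrami operator; the two‑dimensional Calderón theory (which determines the conformal class globally, even from data on an open subset $\Gamma$ of the boundary) then yields a \emph{global} conformal diffeomorphism $\psi\colon\overline{X}_1\to\overline{X}_2$ with $\psi|_\Gamma=\mathrm{id}_\Gamma$ and $\psi^*\overline{g}_2=\omega\,\overline{g}_1$ on all of $\overline{X}_1$.

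Finally I would translate back to the AH metrics. Substituting $\overline{g}_i=\rho_i^{2}g_i$ into $\psi^*\overline{g}_2=\omega\,\overline{g}_1$ gives $(\psi^*\rho_2)^{2}\,\psi^* g_2=\omega\,\rho_1^{2}\,g_1$; hence $c\,\psi^* g_2=g_1$ with $c:=(\psi^*\rho_2)^{2}/(\omega\,\rho_1^{2})\in C^\infty(\overline{X}_1)$, globally for $n=1$ and to infinite order at $\Gamma$ for $n\ge 2$. Positivity and smoothness of $c$ follow from those of $\rho_i$ and $\omega$ in the interior, with the boundary behavior controlled by the matching of the chosen defining functions, and $\psi|_\Gamma=\mathrm{id}_\Gamma$ is inherited from the previous step.

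I expect the main obstacle to be the boundary‑determination step for the Conformal Laplacian: extracting the entire jet of the conformal class from the full symbol of $\Lambda_{\overline{g}}$ while correctly quotienting by the diffeomorphism and conformal gauges, and then matching the resulting normal‑coordinate expansions across the two manifolds along $\Gamma$ so that a single $\psi$ with $\psi|_\Gamma=\mathrm{id}_\Gamma$ works. A secondary technical point is making the scattering‑to‑DN identification exact rather than leading order, namely controlling the geometric potential $V_i$ so that it stays bounded (using evenness modulo $O(\rho^3)$) and is itself read off the lower‑order symbols as a function of the metric jet; this is precisely where the distinction between the general AH case and the Poincaré--Einstein case, in which $V_i\equiv 0$ and $L_{\overline{g}_i}U=0$ holds on the nose, enters.
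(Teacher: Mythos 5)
Your overall strategy coincides with the paper's: conjugate $P_{\frac{n+1}{2}}$ by $\rho_i^{(n-1)/2}$ to pass to the compactified metric $\overline{g}_i=\rho_i^{2}g_i$, identify the scattering matrix at energy $\frac{n+1}{2}$ with the Dirichlet-to-Neumann map of the Conformal Laplacian, quote the boundary determination for the Calder\'on problem of the Conformal Laplacian (\cite{LLS22}*{Lemmas 2.1 \& 2.2} for $n\ge 2$, \cite{LU01} for $n=1$), and pull the conformal factor back through $\overline{g}_i=\rho_i^{2}g_i$ to produce $c$ and $\psi$. The endgame, including the formula $c=(\psi^{*}\rho_2)^{2}/(\omega\rho_1^{2})$, matches the paper's $g_{1}=(c_{1}\rho_{1}^{2})^{-1}(\psi_{1}\circ\psi_{2}^{-1})^{*}(c_{2}\rho_{2}^{2}g_{2})+O(\rho_{1}^{\infty})$.

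The gap is in the middle step. Your own conjugation computation yields $(L_{\overline{g}_i}-V_i)U=0$ with $V_i=\frac{n-1}{4n}\rho_i^{-2}\bigl(R_{g_i}+n(n+1)\bigr)$, so what the hypothesis $S_{g_1}(\frac{n+1}{2})=S_{g_2}(\frac{n+1}{2})$ actually gives you, on your account, is equality of the DN maps of the Schr\"odinger operators $L_{\overline{g}_i}-V_i$ on $\Gamma$ --- not of the Conformal Laplacians themselves. You then invoke \cite{LLS22}*{Lemmas 2.1 \& 2.2}, which are stated and proved for the Conformal Laplacian: their symbol induction uses the conformal covariance to fix the gauge, and an extra zeroth-order term enters the expansion of the DN symbol at order $b_{-1}$ and below, where its normal derivatives mix with the higher normal derivatives of the metric that one is trying to recover. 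Your one-line justification (``$V_i$ is subleading and itself a function of that jet'') does not dispose of this: $V_i$ also depends on the choice of $\rho_i$, which is part of the gauge being quotiented out, and the induction of \cite{DSFKSU09}/\cite{LLS22} would have to be rerun with the extra term. The paper sidesteps this entirely by invoking Lemma \ref{lemma_equiv}, in which the conjugated equation is \emph{exactly} the Conformal Laplacian equation with no residual potential, so that $S(\frac{n+1}{2})=\Lambda_{\overline{g}}$ on the nose; this is precisely where the condition $\tr_{h_0}h_1=0$, supplied by evenness modulo $O(\rho^{3})$, is used (see the remark following that lemma). To close your argument you must either quote that lemma, or show that your $V_i$ vanishes (equivalently $R_{g_i}\equiv-n(n+1)$, which evenness alone only forces to order $O(\rho^{2})$, i.e.\ it makes $V_i$ bounded but not zero), or extend the \cite{LLS22} boundary determination to $L_{\overline{g}}-V$. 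As written, the step does not go through.
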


The result is inspired by the classical result for compact Riemannian manifolds with boundary obtained in \cite{LU89}. In contrast to the proof given in \cite{LU89}, which is based on a factorization of the Laplace--Beltrami operator, our proof follows an indirect approach (although we also have a ``formal'' factorization of the Laplacian in AH geometries, see Lemma \ref{lemma:fac}): we use the relation between the Dirichlet problem for \eqref{eq:eigen_prob} at $s=\frac{n+1}{2}$ and the Dirichlet problem for the Conformal Laplacian (also known as the Yamabe operator), described first in \cite{GG07} for AH manifolds with $R_{g} \equiv -n(n+1)$, and then in \cite{CG11} for more general equations in CCE manifolds. Note that this means we have conformal invariance for the scattering operator this energy, so the conformal factors appearing in the result are natural. They show that, for manifolds as in Theorem \ref{thm:jet}, the DN map of $\rho^{2}g$ is the scattering of $g$ at energy $\frac{n+1}{2}$. According to the results of \cite{LLS22}, the DN map for the Conformal Laplacian determines the jet of a metric at the boundary, which is conformal to $\rho^{2}g$. We remark, however, that the determination of the jet on the boundary in \cite{LLS22} relies on a factorization of the magnetic Schr\"odinger operator carried out in \cite{DSFKSU09}.

Although the theorem has a short proof, the result is new since there are no inverse scattering results at this energy level, and furthermore, we impose no restrictions on the dimension. To the authors' knowledge, the first inverse scattering result in the AH manifold context was given in \cite{JSB00}, where it was shown that the scattering matrix on AH manifolds associated with \eqref{eq:eigen_prob} with a potential determines the metric up to diffeomorphism at infinite order, assuming the same potential, and also the potential up to infinite order if one fixes the metric. They results do not apply in our case due to their restrictions on the energy levels. Using radiation fields, it was proven in \cite{SB05} that the scattering matrix at \emph{all} energies determines the metric up to a diffeomorphism on AH manifolds. In the same spirit, inverse scattering results for asymptotically complex hyperbolic manifolds were obtained in \cite{GSB08}. On even dimensional CCE manifolds (of dimension $n+1$), it is known \cite{GSB09} that the scattering matrix at energy $n$ determines the metric up to a diffemorphism. On CC manifolds, it was shown in \cite{Marazzi09} that the scattering matrix at two energy levels, associated with \eqref{eq:eigen_prob} with potential, determines the potential and the metric (and even the curvature at the boundary) up to the usual gauge. In \cite{GT11}, an inverse scattering result for the Schr\"odinger equation on AH manifolds was obtained, showing that the scattering matrix determines the metric, assuming that the potentials vanish up to order 2 in the boundary. Below, we mention an updated version of this result, which also recovers the metric, see Theorem \ref{thm:2d}. The results from \cite{GSB09} were extended in \cite{Marazzi11}, where it is shown that on CCE manifolds, the scattering matrix at energy $n$ determines the metric at certain order involving $\log$ terms, up to diffeomorphism, and a modified scattering operator determines the metric up to diffeomorphism. In \cite{Isozaki04}, inverse scattering problems in the hyperbolic space were studied and related, similarly to this work, to inverse problems for the Schr\"odinger operator in Euclidean domains. In \cite{IKL10}, inverse scattering was consider for manifolds with cylindrical ends, while in \cite{IKL17}, surfaces with conical singularities, cusps and regular ends were considered, see also the monographs \cites{IK14, IKL14, IKL15}. The results in \cite{SB05} were extended to partial data in \cite{HSB15}, and for scattering with disjoint source and observation sets in \cite{HSB18}. We also highlight the work \cite{Marx-Kuo24} on AH manifolds, where, assuming some evenness conditions on the metric, it is shown that one can determine it from the knowledge of minimal areas.

\subsection*{Acknowledgments}

The author would like to thank Colin Guillarmou, Rafe Mazzeo, Mikko Salo, Gunther Uhlmann, and Andr\'as Vasy for helpful discussions. Additionally, the author expresses gratitude to Stanford University and the University of Washington for their hospitality during his visit, while this work was in progress. The author was partly supported by the Ross-Lynn Scholar Grant from Purdue University's College of Science.

\section{Proof of the Main Result} \label{section:proof}

Before dealing with the inverse problem, we show that the direct problem has a solution with asymptotics as mentioned in the introduction. The most fundamental work in this direction is \cite{GZ03}. The following is mentioned in \cite{GG07} as a consequence of \cite{Guillarmou05}*{Lemma 4.1}, but we provide a short proof by completeness.

\begin{lemma} \label{lemma:asymp_ex}
Let $(\overline{X},g)$ be an asymptotically hyperbolic manifold with $R_{g} \equiv -n(n+1)$. Assume that $\ker_{L^{2}}P_{\frac{n+1}{2}}=\emptyset$. Given $f \in C^{\infty}(\partial \overline{X})$, there exists a unique solution $u$ to
\[ P_{\frac{n+1}{2}}u=0, \quad u|_{\partial \overline{X}}=f, \]
having the expansion
\[ u=\rho^{\frac{n-1}{2}}(f+O(\rho^{3}))+\rho^{\frac{n+1}{2}}\left( -S\left( \frac{n+1}{2} \right)+O(\rho)\right). \]
The term of order $\frac{n+1}{2}$, i.e., $S((n+1)/2)$ is defined as the \emph{scattering matrix of energy $(n+1)/2$}.
\end{lemma}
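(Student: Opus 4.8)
The plan is to follow the two-step scheme of \cite{GZ03,Guillarmou05}: construct a formal solution near $\partial\overline{X}$ governed by the indicial equation of the model operator, and then complete it to an exact solution using the triviality of the $L^{2}$-kernel. Working in the collar provided by the even boundary defining function of Definition~\ref{defin:even}, I would put the metric in normal form $g=\rho^{-2}(d\rho^{2}+h_{\rho})$ with $h_{\rho}=h_{0}+\rho^{2}h_{2}+O(\rho^{3})$, the absence of a linear term being precisely evenness modulo $O(\rho^{3})$. A direct computation then gives
\[
-\Delta_{g}=-(\rho\partial_{\rho})^{2}+n\,\rho\partial_{\rho}-\tfrac{\rho^{2}}{2}\bigl(\partial_{\rho}\log\det h_{\rho}\bigr)\partial_{\rho}-\rho^{2}\Delta_{h_{\rho}},
\]
so $P_{s}=-\Delta_{g}-s(n-s)$ has indicial polynomial $I(\mu)=(\mu-s)(\mu-(n-s))$ with roots $s=\tfrac{n+1}{2}$ and $n-s=\tfrac{n-1}{2}$. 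These differ by $2s-n=1$, placing us in the resonant case in which the second root lies exactly one order above the first; note also that the eigenvalue $s(n-s)=\tfrac{n^{2}-1}{4}$ sits strictly below the bottom $\tfrac{n^{2}}{4}$ of the essential spectrum of $-\Delta_{g}$, which will be used for solvability.

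Next I would build a formal solution $u_{F}=\rho^{n-s}\sum_{j\ge 0}a_{j}\rho^{j}$ with $a_{0}=f$. Since $P_{s}(\rho^{n-s+j}a_{j})=-j(j-1)\,\rho^{n-s+j}a_{j}+(\text{forcing from lower orders})$, the coefficients are determined recursively for every $j\ge 2$, while $j=0$ recovers the free datum $f$. The only obstruction is at $j=1=2s-n$, where $I(n-s+1)=0$: generically the forcing at order $\rho^{s}$ is nonzero and compels a $\rho^{s}\log\rho$ term. That forcing, however, is proportional to $\tr\bigl(h_{0}^{-1}h_{1}\bigr)$, the trace of the linear coefficient of the metric, and this vanishes by evenness modulo $O(\rho^{3})$. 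Hence no logarithm appears, the coefficient of $\rho^{s}$ is left undetermined (it is the homogeneous mode attached to the second root), and the recursion proceeds without further resonance. The outcome is a formal solution reproducing the expansion displayed in the statement, with leading Dirichlet term $\rho^{n-s}f$ and an undetermined coefficient at the resonant order $\rho^{s}$; truncating it produces $u_{F}$ with $P_{s}u_{F}=O(\rho^{N})$ for $N$ as large as desired.

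Finally I would pass to an exact solution and read off $S$. Choosing $N$ so that $\rho^{N}\in L^{2}(X,dV_{g})$, the error $P_{s}u_{F}$ lies in $L^{2}$; since $\tfrac{n^{2}-1}{4}$ is below the essential spectrum, the hypothesis $\ker_{L^{2}}P_{s}=\emptyset$ forces it into the resolvent set, so $P_{s}$ is invertible on $L^{2}$ and $v:=-P_{s}^{-1}(P_{s}u_{F})\in L^{2}$. By boundary regularity for edge operators \cite{Mazzeo91_ell}, $v$ is polyhomogeneous, and being $L^{2}$ it cannot contain the $\rho^{n-s}$ mode (as $\rho^{n-s}=\rho^{(n-1)/2}\notin L^{2}$), so $v=O(\rho^{s})$. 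Then $u=u_{F}+v$ solves $P_{s}u=0$ with $u|_{\partial\overline{X}}=f$, retaining the displayed expansion, and its $\rho^{s}$-coefficient defines $-S(\tfrac{n+1}{2})f$. Uniqueness is then immediate: two solutions with the same $f$ differ by $w$ with vanishing Dirichlet datum, whose expansion therefore begins at $\rho^{s}$; as $s=\tfrac{n+1}{2}>\tfrac{n}{2}$ one has $w=O(\rho^{s})\in L^{2}$, so $w\in\ker_{L^{2}}P_{s}=\{0\}$. The heart of the argument, and its only real obstacle, is the resonance at $2s-n=1$: evenness modulo $O(\rho^{3})$ is exactly the hypothesis that annihilates the logarithmic obstruction at the scattering order and makes the clean expansion possible.
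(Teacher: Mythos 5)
Your proposal is correct and follows essentially the same two-step scheme as the paper: a formal power-series solution whose only resonance, at order $\rho^{s}$ with $2s-n=1$, is killed because evenness modulo $O(\rho^{3})$ forces $\tr_{h_{0}}h_{1}=0$, followed by a correction using triviality of the $L^{2}$-kernel and the same $L^{2}$ argument for uniqueness. The only (immaterial) difference is that where the paper cites \cite{Guillarmou05}*{Lemma 4.1} and the mapping property $R(\frac{n+1}{2})\colon\dot{C}^{\infty}(\overline{X})\to\rho^{\frac{n+1}{2}}C^{\infty}(\overline{X})$, you argue directly that $\frac{n^{2}-1}{4}$ lies below the essential spectrum so the trivial kernel gives invertibility on $L^{2}$, and then invoke $0$-calculus boundary regularity to place the correction in $\rho^{\frac{n+1}{2}}C^{\infty}$ --- the same content packaged differently.
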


\begin{proof}
We follow \cites{GZ03, Guillarmou05,GSB09}. The strategy is as follows: First, we construct a solution modulo infinite order. Then, the hypothesis on the kernel and the metric allow us to apply \cite{Guillarmou05}*{Lemma 4.1} to ensure that the resolvent has no poles at $(n+1)/2$. Therefore, we can modify the just constructed solution to obtain a full solution having the required asymptotics.

The first step is to construct $F_{\infty} \in C^{\infty}(\overline{X})$ such that 
\begin{equation} \label{eq:F_infty}
    P_{\frac{n+1}{2}} \rho^{\frac{n-1}{2}} F_{\infty}=O(\rho^{\infty}), \quad F_{\infty}|_{\rho=0}=f.
\end{equation}
where $\rho$ is a Graham--Lee geodesic boundary defining function (see \cite{GL91}*{Lemma 5.2} and the subsequent paragraph, see also \cite{Graham00}*{Lemma 2.1}). As mentioned in \cite{Guillarmou05}*{p. 24}, it is enough to work in a neighborhood of the boundary. Using the flow $\phi_t(y)$ of the gradient $\nabla^{\rho^2 g} \rho$ of the Graham--Lee boundary defining function, we obtain a diffeomorphism $\phi \colon [0, \eps)_{t} \times \partial \overline{X} \to \phi([0, \eps) \times \partial \overline{X}) \subset \overline{X}$ defined by $\phi(t, y):=\phi_t(y)$, and the metric pulls back to
\[ \phi^{*} g=\frac{d t^2+h(t)}{t^2}, \]
for some smooth one-parameter family of metrics $h(t)$ on the boundary $\partial \overline{X}$, where $\phi^* \rho=t$. 
It can be shown that
\[ P_{\frac{n+1}{2}} t^{\frac{n-1}{2}}=t^{\frac{n-1}{2}} \mathscr{D}, \]
where
\[ \mathscr{D}:=-t^{2}\partial_{x}^{2}-\frac{t}{2}\tr_{h}\partial_{t}h t\partial_{t}-\frac{n-1}{4}\tr_{h}\partial_{t}h+t^{2}\Delta_{h}. \]
Observe that for $j \in \N_{0}=\{0,1,\ldots\}$ we have
\begin{equation} \label{eq:ind_id}
    \mathscr{D} (f t^j )=j(1-j) f t^j+t^j G\left(\frac{n+1}{2}-j\right) f,
\end{equation}
where
\[ (G(z)f)(x,y):=t^{2}\Delta_{h}f-\frac{n-z}{2}t\tr_{h}\partial_{t}h f. \]
Suppose now that $\tilde{F} \in C^{\infty}(\overline{X})$ satisfies $\mathscr{D}\tilde{F}=O(t^{j})$ for $j \geq 1$. Since $G(z)f=O(t)$ (a priori, we analyze this further below), using \eqref{eq:ind_id} with $f=(t^{-j}\mathscr{D}\tilde{F})|_{t=0}/(j(1-j))$, we obtain 
\begin{equation} \label{eq:construction_F_1_x^j}
    \mathscr{D}\tilde{F}-\mathscr{D}\left( t^{j} \frac{(t^{-j}\mathscr{D}\tilde{F})|_{t=0}}{j(1-j)} \right)=O(t^{j+1}),
\end{equation}
for $j \neq 1$. Indeed, on the left hand side of \eqref{eq:construction_F_1_x^j}, we are subtracting the term of order $O(t^{j})$ from $\mathscr{D}\tilde{F}$. This procedure suggests the following recursive definition of $f_{j}$ and $F_{j}$:
\begin{equation} \label{eq:ind_constr}
    F_0:=f_0, \quad f_j :=\frac{-(t^{-j} \mathscr{D} (F_{j-1}))|_{t=0}}{j(1-j)}, \quad F_j :=F_{j-1}+f_j t^j, \quad j \geq 4 .
\end{equation}
To construct $f_{1}$, $f_{2}$, and $f_{3}$, we proceed as follows: To construct $f_{1}$, and hence $F_{1}$, we look up for terms or order $1$ in $\mathscr{D}F_{0}$. First, recall from \cite{GG07}*{Section 4} that $R_{g} \equiv -n(n+1)$ implies $\tr_{h_{0}}h_{1}=0$. Indeed, since $g$ is conformal to $t^{2}\overline{g}=dt^{2}+h(t)$, we have
\[ R_{g}=-n(n+1)+nt\partial_{t}\log (\det h(t))+t^{2}R_{\overline{g}}. \]
Since $R_{g} \equiv -n(n+1)$, after simplification, we obtain, by first diving by $t$ and then letting $t \to 0$, that $\tr_{h_{0}}h_{1}=0$, where $h_{0}=h(0)$ and $h_{1}=\partial_{t}h(0)$. This proves the claim. Now, we are in good shape to use \cite{Guillarmou05}*{Equations (4.8), (4.9)}, that since $\tr_{h_{0}}h_{1}=0$, we have 
\[ G(z)=t^{2}(\Delta_{h_{0}}+D^{0})+t^{3}D^{2}+O(t^{4}), \]
where $D^{j}$ are differential operators of order $j$ on $\partial \overline{X}$. So $\mathscr{D}f$ only contains terms of order $3$ or more. Hence, $f_{1}:=0$ and $F_{1}:=F_{0}$. Similarly, for $f_{2}$ we look up for terms or order $2$ in $\mathscr{D}F_{1}$. By the previous argument, there are non of them, and we set $f_{2}:=0$ and $F_{2}:=F_{0}$. Finally, for $f_{3}$ we see that $t^{3}(\Delta_{h_{0}}+D^{2})f$ is the term of order 3 in $\mathscr{D}F_{2}=\mathscr{D}f$, and hence we define $f_{3}:=(\Delta_{h_{0}}+D^{2})f$ and $F_{3}=t^{3}f_{3}+f_{0}$. We can now continue using \eqref{eq:ind_constr} inductively. By Borel's lemma, we can find $F_{\infty} \in C^{\infty}(\overline{X})$ such that $F_{\infty}-F_{j}=O(t^{j+1})$, and satisfying \eqref{eq:F_infty}.

To complete the construction of the full solution to the eigenvalue problem, we now use our assumption on $g$ and the $L^{2}$ kernel to invoke \cite{Guillarmou05}*{Lemma 4.1}, which shows that the resolvent of $P_{s}$ has no poles at $s=(n+1)/2$. Hence, it is an analytic family by \cite{GZ03}*{Proposition 3.1} (and the results in \cite{MM87}), and the resolvent of $P_{\frac{n+1}{2}}$ satisfies
\[ R\left( \frac{n+1}{2} \right) \colon \dot{C}^{\infty}(\overline{X}) \to \rho^{\frac{n+1}{2}}C^{\infty}(\overline{X}), \]
where $\dot{C}^{\infty}(\overline{X}):=\{u \in C^{\infty}(\overline{X}):u=O(\rho^{\infty})\}$. Therefore, we can set
\[ u=\rho^{\frac{n-1}{2}}F_{\infty}-R\left( \frac{n+1}{2} \right)\rho^{\frac{n-1}{2}}F_{\infty} \]
which solves the eigenvalue problem. 

To see the uniqueness, we notice that if $u_{1}$ and $u_{2}$ are two solutions with the same term of order $\rho^{\frac{n-1}{2}}$, then $u_{1}-u_{2} \in \rho^{\frac{n+1}{2}}C^{\infty}(\overline{X})$. This implies that $u_{1}-u_{2} \in L^{2}(\overline{X})$, contradicting our hypothesis. This finishes the proof.    
\end{proof}

\begin{rmk}
    Alternatively, one can use a more direct method in the following sense. One can obtain a solution to the problem $P_{s}u'_{s}=O(\rho^{\infty})$ with $u'_{s}|_{\partial \overline{X}}=f$ for $s$ close to $(n+1)/2$. Then, a full solution can be obtained by using the resolvent at energy $s$. Since there are no poles, $R(s)$ extends to $R((n+1)/2)$, and one then obtain a solution $u$ to $P_{\frac{n+1}{2}}u=0$ and $u_{|\partial \overline{X}}=f$.
\end{rmk}

\begin{rmk}
    We note that we only used the curvature condition to ensure that $\tr_{h_{0}}h_{1}=0$. The vanishing of the trace means that $\partial \overline{X}$ minimal in $(\overline{X},\rho^{2}g)$. In particular, the result is also true if we change the curvature assumption by evenness of the metric $g$ modulo $O(\rho^{3})$. We recall that $(M,g)$ is \emph{even modulo} $O(\rho^{2k+1})$ if there is a boundary defining function $\rho$ with $|d\rho|_{\rho^{2}g}=1$, and such that in a collar neighborhood, the pullback of $g$ by the gradient flow of $\nabla^{\rho^{2}g}\rho$ takes the form
    \[ \frac{1}{t^{2}}\left(dt^2+\sum_{i=0}^k h_{2 i} t^{2 i}+h_{2 k+1} t^{2 k+1}+O(t^{2 k+2})\right), \]
    where $h_{j}$'s are symmetric tensors on $\partial \overline{X}$. It was shown in \cite{Guillarmou05}*{Lemma 2.1} that this is a well-defined notion, meaning that it is independent of the boundary defining function. Examples of even metrics can be found in the remark after Theorem 1.4 in \cite{Guillarmou05} and the references therein. For instance, hyperbolic metrics perturbed on a compact set are even modulo $O(\rho^{\infty})$, and CCE manifolds of dimension $n+1$ are even modulo $O(\rho^{n})$.
\end{rmk}

The key relation between inverse scattering on AH manifolds and the Calder\'on problem for the Conformal Laplacian is given by the following result of Guillarmou and Guilollopé:

\begin{lemma}[\cite{GG07}*{Section 4}] \label{lemma_equiv}
Let $(\overline{X},g)$ be an asymptotically hyperbolic manifold, with $R_{g} \equiv -n(n+1)$. Then $u \in C^{\infty}(\overline{X})$ is a solution to 
\[ P_{\frac{n+1}{2}}u=0, \quad u|_{\partial \overline{X}}=f, \]
if and only if $U:=\rho^{-\frac{n-1}{2}}u$ is a solution to 
\[ \left(-\Delta_{\overline{g}}+\frac{n-1}{4n}R_{\overline{g}}\right)U=0, \quad U|_{\partial \overline{X}}=f, \]
where $R_{\overline{g}}$ is the scalar curvature associated to $\overline{g}$. Furthermore, 
\[ S \left( \frac{n+1}{2} \right)=\Lambda_{\overline{g}}, \]
as operators from $C^{\infty}(\partial \overline{X})$ to itself.
\end{lemma}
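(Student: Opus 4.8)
The plan is to deduce everything from the conformal covariance of the Yamabe operator. Writing $\mathcal{L}_{\hat g} := -\Delta_{\hat g} + \frac{N-2}{4(N-1)} R_{\hat g}$ for the conformal Laplacian of a metric $\hat g$ in dimension $N$, observe that with $N = n+1$ the coefficient $\frac{N-2}{4(N-1)}$ equals $\frac{n-1}{4n}$, so the operator appearing in the statement is exactly $\mathcal{L}_{\overline{g}}$. I would then invoke the exact covariance identity for the conformal change $\overline{g} = \rho^2 g$,
\[ \mathcal{L}_{\overline{g}}(U) = \rho^{-\frac{n+3}{2}}\, \mathcal{L}_{g}\!\left(\rho^{\frac{n-1}{2}} U\right), \]
and, setting $u := \rho^{\frac{n-1}{2}} U$, read it as the equivalence $\mathcal{L}_{\overline{g}} U = 0 \iff \mathcal{L}_{g} u = 0$. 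The statement then reduces to identifying $\mathcal{L}_g$ with $P_{\frac{n+1}{2}}$.

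For this identification I would simply subtract the two operators: since $P_{\frac{n+1}{2}} = -\Delta_g - \frac{n^2-1}{4}$ (as $\frac{n+1}{2}\cdot\frac{n-1}{2} = \frac{n^2-1}{4}$), one finds
\[ \mathcal{L}_g - P_{\frac{n+1}{2}} = \frac{n-1}{4n}R_g + \frac{n^2-1}{4} = \frac{n-1}{4n}\bigl(R_g + n(n+1)\bigr), \]
which vanishes precisely when $R_g = -n(n+1)$, the scalar curvature of hyperbolic space. I expect this to be the main obstacle: one must verify that conjugating $\mathcal{L}_{\overline{g}}$ by $\rho^{\frac{n-1}{2}}$ returns $P_{\frac{n+1}{2}}$ exactly, rather than $P_{\frac{n+1}{2}}$ plus the zeroth-order potential $\frac{n-1}{4n}(R_g + n(n+1))\rho^{-2}$. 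For Poincar\'e--Einstein metrics this is immediate, since $\ric_g = -ng$ forces $R_g = -n(n+1)$; in the general asymptotically hyperbolic case this scalar-curvature normalization is the essential geometric input, and I would exploit $|d\rho|_{\overline{g}} = 1$, the even-modulo-$O(\rho^3)$ structure, and the Graham--Lee normal form to pin down $R_g$ to the order that governs the solution correspondence and the boundary expansion.

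Granting the operator identity, the Dirichlet data match automatically and $U$ is regular at the boundary: by Lemma \ref{lemma:asymp_ex} one has $u = \rho^{\frac{n-1}{2}}(f + O(\rho^3)) + \rho^{\frac{n+1}{2}}(-S(\tfrac{n+1}{2})f + O(\rho))$, so that $U = \rho^{-\frac{n-1}{2}} u = f - S(\tfrac{n+1}{2})f\,\rho + O(\rho^2)$ extends smoothly to $\overline{X}$ (this is exactly where evenness is used) with $U|_{\partial \overline{X}} = f$. Finally, to obtain $S(\tfrac{n+1}{2}) = \Lambda_{\overline{g}}$ I would use that the Graham--Lee defining function is $\overline{g}$-geodesic near $\partial \overline{X}$, so that $\partial_\rho$ is the inward $\overline{g}$-unit normal; the Dirichlet-to-Neumann map is then the outward normal derivative $\Lambda_{\overline{g}} f = -\partial_\rho U|_{\rho=0}$, and reading off the linear term of the expansion of $U$ gives $\partial_\rho U|_{\rho=0} = -S(\tfrac{n+1}{2})f$, whence $\Lambda_{\overline{g}} f = S(\tfrac{n+1}{2})f$. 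The only point requiring care here is matching the normalization of the subleading scattering coefficient with that of the normal derivative.
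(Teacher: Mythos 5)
Your route---the conformal covariance law $\mathcal{L}_{\overline{g}}(U)=\rho^{-\frac{n+3}{2}}\mathcal{L}_g\bigl(\rho^{\frac{n-1}{2}}U\bigr)$ for $\overline{g}=\rho^2g$, combined with the algebraic identity $\mathcal{L}_g=-\Delta_g+\frac{n-1}{4n}R_g=-\Delta_g-\frac{n^2-1}{4}=P_{\frac{n+1}{2}}$ when $R_g=-n(n+1)$---is precisely the argument of the cited sources; the paper itself does not prove this lemma but imports it from \cite{GG07} (see also \cite{CG11}), and the remark that follows the lemma is where the hypotheses are negotiated. Your exponent bookkeeping, the use of evenness to get a smooth $U$ with $U=f-\bigl(S(\tfrac{n+1}{2})f\bigr)\rho+O(\rho^2)$, and the reading of $\Lambda_{\overline{g}}$ off the linear term are all fine.

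The gap is exactly the one you flag, and it cannot be closed by the means you propose. The identity $\mathcal{L}_g=P_{\frac{n+1}{2}}$ requires $R_g\equiv-n(n+1)$ on all of $X$, and this is an honest \emph{hypothesis} in \cite{GG07} (constant scalar curvature; in \cite{CG11} it follows from the Einstein condition). It does not follow from $|d\rho|_{\overline{g}}=1$ plus evenness modulo $O(\rho^3)$: the conformal change of scalar curvature gives, for a geodesic defining function, $R_g+n(n+1)=\rho^2R_{\overline{g}}+2n\rho\,\Delta_{\overline{g}}\rho$, and evenness (i.e.\ $h_1=0$, hence $\Delta_{\overline{g}}\rho=O(\rho)$) only makes this $O(\rho^2)$, not zero. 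A compactly supported perturbation of the hyperbolic metric is even modulo $O(\rho^\infty)$ yet has $R_g\neq-n(n+1)$ on the perturbed set, so no amount of boundary normal-form analysis will ``pin down $R_g$'' globally. The leftover term $\frac{n-1}{4n}\bigl(R_g+n(n+1)\bigr)$ is a genuine zeroth-order potential separating $\mathcal{L}_g$ from $P_{\frac{n+1}{2}}$; it perturbs the solution of the Dirichlet problem nonlocally through the resolvent and hence shifts the coefficient of $\rho^{\frac{n+1}{2}}$, i.e.\ it contributes a nontrivial (smoothing) error to $S(\tfrac{n+1}{2})-\Lambda_{\overline{g}}$, which is fatal in an inverse-problem application. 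To complete the argument you must either assume $R_g\equiv-n(n+1)$ outright (as the sources do), or switch mechanisms---e.g.\ the ground-state transform with the Chang--Gonz\'alez special defining function $\rho_*=v^{2/(n-1)}$, $P_{\frac{n+1}{2}}v=0$, under which $P_{\frac{n+1}{2}}u=0$ becomes $\Delta_{\rho_*^2g}(u/v)=0$---and then still reconcile $-\Delta_{\rho_*^2g}$ with the conformal Laplacian of $\rho_*^2g$. In short, the step you defer is not a verification but the crux, and as outlined it fails under the stated hypotheses.
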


We point out that on CCE manifolds, this was extended to any energy level in \cite{CG11}*{Lemma 4.1, Theorem 4.3}. 

To reach our goal, we will need the following result concerning the boundary jet recovery in the inverse problem for the Conformal Laplacian.

\begin{lemma}[\cite{LLS22}*{Lemmas 2.1 \& 2.2}] \label{lemma:jet_conf}
Let $(\overline{X}_{1},\overline{g}_{1})$, $(\overline{X}_{2},\overline{g}_{2})$ be compact Riemannian manifolds with boundary, with a common boundary portion $\Gamma \subset \partial \overline{X}_{1}, \partial \overline{X}_{2}$. Assume that Dirichlet-to-Neumann maps for the Conformal Laplacian agree, i.e., $\Lambda_{\overline{g}_{1}}=\Lambda_{\overline{g}_{2}}$. Then, there exist $c_{i} \in C^{\infty}(M_{i})$ positive functions with $c_{i}|_{\Gamma}=1$ such that if $\tilde{g}_{i}=c_{i}\overline{g}_{i}$, then the jets of $\tilde{g}_{1}$ and $\tilde{g}_{2}$ on the respective $\tilde{g}_{i}$-normal coordinates agree.
\end{lemma}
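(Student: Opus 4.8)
The plan is to reduce the statement to boundary determination for a Laplace-type (magnetic Schr\"odinger) operator and then read off the metric jet by matching the full symbols of the two Dirichlet-to-Neumann maps. The structural input is the conformal covariance of the Conformal Laplacian $L_{\overline g}=-\Delta_{\overline g}+\frac{n-1}{4n}R_{\overline g}$ on the $(n+1)$-dimensional $M_i$: for a positive $c\in C^\infty(M_i)$ one has $L_{c\overline g}\phi=c^{-\frac{n+3}{4}}L_{\overline g}(c^{\frac{n-1}{4}}\phi)$, so $U$ solves $L_{\overline g}U=0$ if and only if $c^{-\frac{n-1}{4}}U$ solves $L_{c\overline g}(\cdot)=0$. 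A short computation then shows that for $c|_\Gamma=1$ the two boundary maps differ only by a zeroth-order multiplication operator, $\Lambda_{c\overline g}=\Lambda_{\overline g}-\tfrac{n-1}{4}(\partial_\nu c)|_\Gamma$. Thus $\Lambda_{\overline g}$ is \emph{not} conformally invariant; its principal and lower-order pseudodifferential parts encode only the conformal class of the jet of $\overline g_i$, while the remaining scalar ambiguity can be absorbed by the free normal derivative $(\partial_\nu c)|_\Gamma$. This is exactly why the conclusion can only be expected up to conformal factors $c_i$ with $c_i|_\Gamma=1$.

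Next I would put each $\tilde g_i=c_i\overline g_i$ in its own boundary normal (semigeodesic) coordinates $(x',x_n)$ adapted to $\Gamma$, so that $\tilde g_i=dx_n^2+(\tilde g_i)_{\alpha\beta}(x',x_n)\,dx^\alpha dx^\beta$ and $L_{\tilde g_i}=-\partial_{x_n}^2-\tfrac12(\partial_{x_n}\log|\tilde g_i|)\,\partial_{x_n}-\Delta_{(\tilde g_i)(x_n)}+\tfrac{n-1}{4n}R_{\tilde g_i}$, a Laplace-type operator whose first-order drift term plays the role of a magnetic potential. I would then invoke the factorization of the magnetic Schr\"odinger operator from \cite{DSFKSU09}: near $\Gamma$ there are tangential first-order classical pseudodifferential operators $A^i_\pm(x_n)$, smooth in $x_n$ and with principal symbols $\pm i|\xi'|_{(\tilde g_i)(x_n)}$, such that $L_{\tilde g_i}=-(\partial_{x_n}+A^i_-)(\partial_{x_n}+A^i_+)$ modulo a smoothing operator. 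The factorization identifies $\Lambda_{\tilde g_i}$, up to a smoothing error, with $A^i_+|_{x_n=0}$, a classical first-order pseudodifferential operator on $\Gamma$ whose full symbol is computed recursively, order by order in the symbol calculus, from the symbol of $L_{\tilde g_i}$, hence from the Taylor coefficients $\partial_{x_n}^k(\tilde g_i)_{\alpha\beta}|_{x_n=0}$ and the curvature potential.

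The recovery of the jet is then a symbol-matching induction. From $\Lambda_{\overline g_1}=\Lambda_{\overline g_2}$, together with the transformation of the first paragraph, the principal symbol gives $(\tilde g_1)_{\alpha\beta}|_{x_n=0}=(\tilde g_2)_{\alpha\beta}|_{x_n=0}$ after choosing $c_i|_\Gamma=1$. Assuming $\partial_{x_n}^j(\tilde g_i)_{\alpha\beta}|_{x_n=0}$ already match for $j<k$, the symbol term of order $1-k$ depends on $\partial_{x_n}^k(\tilde g_i)_{\alpha\beta}|_{x_n=0}$ linearly modulo already-matched quantities; since the potential $\tfrac{n-1}{4n}R_{\tilde g_i}$ is itself a universal expression in these same jets it introduces no independent unknown, and the remaining conformal freedom is removed by fixing $(\partial_{x_n}^k c_i)|_{x_n=0}$ appropriately. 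This forces the $k$-th normal derivatives to agree, yielding equality of the jets of $\tilde g_1$ and $\tilde g_2$ at $\Gamma$ in their respective normal coordinates.

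The main obstacle I anticipate is disentangling the conformal gauge from the genuine geometric data. Because $\Lambda$ is only conformally covariant, not invariant, the matching cannot be done directly on the metrics: one must match the conformally invariant parts of the symbol and then construct the factors $c_i$ (with $c_i|_\Gamma=1$ but prescribed higher normal jet) realizing a common representative. Verifying that this can be done consistently at every order — i.e., that the drift term and the scalar curvature transform so as to leave exactly a one-parameter conformal ambiguity at each step that the $c_i$-jet can cancel — is the delicate part; once the gauge is pinned down, the symbol recursion itself is mechanical.
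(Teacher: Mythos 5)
Your proposal is correct and follows essentially the same route the paper attributes to \cite{LLS22}: conformal covariance of $L_{\overline g}$, the \cite{DSFKSU09} factorization identifying the DN map with a first-order tangential $\Psi$DO modulo smoothing, and an order-by-order matching of the full symbol in boundary normal coordinates, with the conformal factors $c_i$ used to impose the normalizing vanishing condition on the metric jet. The paper itself only cites this lemma rather than proving it, and its surrounding discussion (including the footnote on the condition $\partial_{\tilde x_n}^j(\tilde g_{\alpha\beta}\partial_{\tilde x_n}\tilde g^{\alpha\beta})(x',0)=0$ for $j\geq 1$) describes precisely the gauge-fixing step you correctly flag as the delicate point.
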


\begin{proof}[Proof of Theorem \ref{thm:jet}]
    Let $\rho_{i}$ be geodesic boundary defining functions for $g_{i}$ such that the scattering matrices at energy $\frac{n+1}{2}$ coincide, Lemma \ref{lemma_equiv} implies that the same holds for the DN maps associated to the Conformal Laplacian of the metrics $\rho_{1}^{2}g_{1}$ and $\rho_{2}^{2}g_{2}$. If $n \geq 2$, Lemma \ref{lemma:jet_conf} says that there exist $c_{i} \in C^{\infty}(\overline{X}_{i})$ with $c_{i}|_{\Gamma}=1$, $i=1,2$ such that given coordinates $y$ in $\Gamma$, if $\psi_1$ and $\psi_2$ are $c_{1}\rho_{1}^{2}g_1$- and $c_{2}\rho_{2}^{2}g_2$-boundary normal coordinates constructed by using the same $y$-coordinates, then $(\psi_{1}^{-1})^{*}(c_{1}\rho_{1}^{2}g_{1})=(\psi_{2}^{-1})^{*}(c_{2}\rho_{2}^{2}g_{2})+O(t^{\infty})$. Hence, $g_{1}=(c_{1}\rho_{1}^{2})^{-1}(\psi_{1}\circ \psi_{2}^{-1})^{*}(c_{2}\rho_{2}^{2}g_{2})+O(\rho_{1}^{\infty})$. To deal with surfaces, one applies the result from \cite{LU01} instead of \cite{LLS22} to conclude the proof.
\end{proof}

\begin{rmk}
Note that for surfaces, the hypothesis in the kernel is superfluous by the results in \cites{MM87, GZ03}.
\end{rmk}

\begin{rmk}
By the results in \cite{GG07}, one actually obtains equivalence between the inverse scattering problem at energy $1$ and the anisotropic Calder\'on problem for surfaces. However, in higher dimensions, one also needs information of the mean curvature of the boundary with respect to the compactified metric.
\end{rmk}

\begin{rmk}
When $n+1 \geq 2$, if all the components are analytic, i.e., the metrics, the conformal factors and the diffeomorphisms, then one can extend the diffemorphism as in \cite{LU89} to obtain a global one. Furthermore, if one assumes that the compactified manifolds $(\overline{X}_{i},c_{i}g_{i})$ are \emph{locally conformally real analytic} in the sense of \cite{LLS22}, then a global result can be achieved as in the case of surfaces (Theorem \ref{thm:2d}). We point out that is condition seems to be stronger that being analytic on the interior, so it is not immediate that CCE are locally conformally real analytic up to the boundary. We also mention that since we recover metrics conformal to the initial ones, even if the initial ones are CCE, they conformal versions are in general not Einstein. Hence, we cannot apply the methods of \cite{GSB08} in our case to obtain a global result.
\end{rmk}

We would like to point out that one needs the conformal factors in order to obtain a vanishing condition in a certain expression involving the metric tensor in order to recover the jet of the metric at the boundary. The recovery of the jet is obtained by the following procedure, see \cite{DSFKSU09}*{Equation 8.2}, \cite{LLS22}*{p.~11136} (We point out that there is a little typo there, the condition should be \[ 
\partial_{\tilde{x}_n}^j (\tilde{g}_{\alpha \beta} \partial_{\tilde{x}_n} \tilde{g}^{\alpha \beta}) (x', 0)=0, \quad j \geq 1.) \] First, one factorizes the Conformal Laplacian, and then one shows that the Dirichlet-to-Neumann map is equal to a pseudodifferential operator of order 1 appearing in such factorization, modulo a smoothing operator. Hence, to recover the jet of the metric, one shows that the DN map determines this $\Psi$DO, and finally one shows that from the full symbol one can recover the jet of the metric. In the context of inverse problems, this technique was first used in \cite{SU88} for the conductivity equation, \cite{LU89} for the Laplace--Beltrami operator, and then \cite{NSU95} for the magnetic Schr\"odinger operator with Euclidean metric. In \cite{DSFKSU09} the procedure was generalized for the Schr\"odinger operator in the presence of a magnetic field on Riemannian manifolds. We observe that if in particular we have no magnetic field this procedure gives a (local) factorization of $P_{s}$. Indeed, in the coordinates given by the geodesic boundary defining function $\rho$ (i.e., on $\partial \overline{X} \times [0,\eps)$, we use coordinates $(y,t)$ so that $\overline{g}=\rho^{2}g=dt^{2}+h_{t}(y)$, with dual variables $(\xi,\eta)$), we have
\begin{equation} \label{eq:fac_cl}
    L_{\overline{g}}=(D_{t}+iE-iB(y,t,D_{y}))(D_{t}+iB(y,t,D_{y}))
\end{equation}
modulo $\Psi^{-\infty}$, where
\[ E(x)=\frac{1}{2}\overline{g}_{\alpha \beta}\partial_{t}\overline{g}^{\alpha \beta}, \]
and $B \in \Psi^{1}$ has symbol $b\sim \sum_{j \leq 1}b_{j}$, where each term in the expansion is given by 
\begin{align*}
    b_{0} &=-\sqrt{q_{2}}, \\
    b_{1} &=\frac{1}{2 b_1}(-\partial_t b_1+E b_1-\nabla_{\eta} b_1 \cdot D_{y} b_1+q_1), \\
    b_{-1} &=\frac{1}{2 b_1}\left(-\partial_t b_0+E b_0-\sum_{\substack{0 \leq j, k \leq 1 \\ j+k=|K|}} \frac{\partial_{\eta}^K b_j D_{y}^K b_k}{K!}+\frac{n-1}{4n}R_{\overline{g}}\right), \\
    b_{m-1} &=\frac{1}{2 b_1}\left(-\partial_t b_m+E b_m-\sum_{\substack{m \leq j, k \leq 1 \\ j+k-|K|=m}} \frac{\partial_{\eta}^K b_j D_{y}^K b_k}{K!}\right) \quad(m \leq-1).
\end{align*}
Here, $q_{2}$ and $q_{1}$ are the symbols of 
\[ \overline{g}^{\alpha \beta}D_{\alpha}D_{\beta}, \qquad -i\left(\frac{1}{2} \overline{g}^{\alpha \beta} \partial_\alpha(\log |g|)+\partial_\alpha \overline{g}^{\alpha \beta}\right) D_\beta, \]
respectively. Here the left quantization is used. Hence, using the invariance of the Conformal Laplacian together with \eqref{eq:fac_cl}, we obtain
\begin{align*}
    P_{\frac{n+1}{2}}&=t^{\frac{n+3}{2}}L_{\overline{g}} t^{-\frac{n-1}{2}} \\
    &= t^{\frac{n+3}{2}}(D_{t}+iE-iB(y,t,D_{y}))(D_{t}+iB(y,t,D_{y}))t^{-\frac{n-1}{2}} \\
    &=t^{\frac{n+3}{2}}(D_{t}+iE-iB(y,t,D_{y}))t^{-\frac{n+1}{2}}\left(tD_{t}-\frac{n-1}{2}+itB \right) \\
    &=\left( tD_{t}-\frac{n+1}{2}+itE-itB \right) \left(tD_{t}-\frac{n-1}{2}+itB \right).
\end{align*}

We summarize this discussion as follows:

\begin{lemma} \label{lemma:fac}
    Formally, we have the factorization
    \[P_{\frac{n+1}{2}}=\left( tD_{t}-\frac{n+1}{2}+itE-itB \right)\left( tD_{t}-\frac{n-1}{2}+itB \right) \]
    modulo $t \Psi^{-\infty}$.
\end{lemma}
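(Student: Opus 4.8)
The plan is to obtain the factorization by combining the conformal relation behind Lemma \ref{lemma_equiv} with the factorization \eqref{eq:fac_cl} of the Conformal Laplacian, and then conjugating the two first-order factors through the powers of $t$. The only substantive input, the factorization \eqref{eq:fac_cl}, is imported from the symbol calculus of \cite{DSFKSU09}, so the argument is essentially bookkeeping, which is why the statement is only ``formal.''

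First I would upgrade Lemma \ref{lemma_equiv} to the operator identity it encodes. The conformal covariance of the Yamabe operator under $\overline{g}=\rho^{2}g$ is a genuine identity between operators, not merely between solutions, and together with the definition of $P_{s}$ it gives, in the collar $\partial\overline{X}\times[0,\eps)$ with $t=\rho$ and $\overline{g}=dt^{2}+h_{t}$,
\[ P_{\frac{n+1}{2}}=t^{\frac{n+3}{2}}\,L_{\overline{g}}\,t^{-\frac{n-1}{2}}. \]
I would then substitute \eqref{eq:fac_cl}, writing $L_{\overline{g}}=(D_{t}+iE-iB)(D_{t}+iB)+R$ with $R\in\Psi^{-\infty}$ a tangential smoothing operator depending smoothly on $t$, and insert $t^{-\frac{n+1}{2}}t^{\frac{n+1}{2}}=\mathrm{Id}$ between the two first-order factors, using that conjugation by powers of $t$ distributes over the product on the interior $\{t>0\}$.

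The conjugation of each factor is governed by the commutator $[D_{t},t^{\beta}]=\tfrac{\beta}{i}\,t^{\beta-1}$ together with the fact that $B\in\Psi^{1}$ and the multiplication operator $E$ are tangential, hence commute with functions of $t$. Applying this to the inner factor conjugated by $t^{\frac{n+1}{2}}(\,\cdot\,)t^{-\frac{n-1}{2}}$ and to the outer factor conjugated by $t^{\frac{n+3}{2}}(\,\cdot\,)t^{-\frac{n+1}{2}}$ collects the two first-order operators in the statement, the net powers of $t$ cancelling the prefactor $t^{\frac{n+3}{2}}$ and the suffix $t^{-\frac{n-1}{2}}$ and leaving the Euler operator $tD_{t}$ together with the indicial constants $\tfrac{n\pm1}{2}$ and the tangential terms $itE$, $itB$. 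Finally, the remainder contributes $t^{\frac{n+3}{2}}R\,t^{-\frac{n-1}{2}}=t^{2}R$; since $R$ commutes with functions of $t$ and $tR\in\Psi^{-\infty}$, this lies in $t\,\Psi^{-\infty}$, which is exactly the asserted error. I expect the only delicate point to be the bookkeeping of the indicial shifts $[D_{t},t^{\beta}]$ and the tracking of the surviving power of $t$ in the remainder; there is no conceptual obstacle once \eqref{eq:fac_cl} is granted.
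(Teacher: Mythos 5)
Your proposal is correct and follows essentially the same route as the paper: the displayed computation before Lemma \ref{lemma:fac} uses exactly the conformal covariance identity $P_{\frac{n+1}{2}}=t^{\frac{n+3}{2}}L_{\overline{g}}\,t^{-\frac{n-1}{2}}$, substitutes the factorization \eqref{eq:fac_cl}, and pushes the powers of $t$ through the two first-order factors via the commutator $[D_{t},t^{\beta}]$. Your explicit tracking of the remainder, $t^{\frac{n+3}{2}}R\,t^{-\frac{n-1}{2}}=t^{2}R\in t\,\Psi^{-\infty}$, is a small point the paper leaves implicit, but otherwise the arguments coincide.
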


We refer to this as a formal factorization, since it is merely an algebraic manipulation, but not a proper factorization in the standard algebra of $\Psi$DO. Since $\Delta_{g}$ is a 0-differential operator (\cite{Mazzeo91_ell}*{Proposition 2.6}), one should expect to have a factorization in the 0-calculus of Mazzeo and Melrose. Lemma \ref{lemma:fac} suggests in general a factorization of the following form
\begin{equation} \label{eq:factorization}
    P_{s}=\left( tD_{t}-s+iE-iA \right)\left( tD_{t}-(n-s)+iA \right).
\end{equation}

\begin{rmk}
We point out that it is shown in \cite{CG11} that there exists ``special'' boundary defining functions $\rho_{1}$, $\rho_{2}$ so that, in a collar neighborhood of the boundary, $P_{s}$ becomes:
\begin{enumerate}
    \item the Laplace--Beltrami operator associated to $\rho_{1}^{2}g$, when $s=\frac{n+1}{2}$,
    \item the Bakry--\'Emery Laplacian (also known as Ornstein-Uhlenbeck operator, Witten Laplacian or Weighted Laplacian)  $-\Div ((\rho_{2})^{a} \nabla \bullet)$. Inverse problems for this operator have been studied recently in \cite{BK25}. The equation can be thought as applying the operator 
    \[ L=e^{V}\Div (e^{-V} \nabla \bullet), \]
    where $V=-\log\rho_{2}^{a}$.
\end{enumerate}
\end{rmk}

Finally, we state the following stronger version of Theorem \ref{thm:jet} in dimension 2.

\begin{thm} \label{thm:2d}
Let $(\overline{X},g_{i})$ be two asymptotically hyperbolic surfaces with common boundary $\partial \overline{X}$, and let $V_{i}\in \rho_{i}^2 C^{\infty}(\overline{X})$ be two potentials. Assume that
\[ \{\partial_{\rho_{1}} u |_{\partial \overline{X}} : u \in \ker_{L^2}(\Delta_{g_{1}}+V_1 )\}=\{\partial_{\rho_{2}} u |_{\partial \overline{X}} : u \in \ker_{L^2}(\Delta_{g_{2}}+V_2 )\}, \]
and let $S_{i}(1)$ (associated to $\rho_{i}$) be the scattering map for the operator $\Delta_{g_{i}}+V_i$ for $i=1, 2$. If $S_{1}(1)=S_{2}(1)$, then there exists a diffeomorphism $\psi \colon \overline{X} \to \overline{X}$ fixing the boundary pointwise, and a smooth function $c$ such that
\[ g_{1}=c\psi^{*}g_{2}, \quad V_{1}=c^{-1}\psi^{*}V_{2}. \]
\end{thm}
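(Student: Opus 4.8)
The plan is to reduce the problem, at the critical energy $s=\frac{n+1}{2}=1$, to the two-dimensional Calderón problem for a Schrödinger operator on the conformally compactified surface, and then to quote the known uniqueness result on surfaces. Two features of dimension two drive the argument: since $n=1$ we have $s(n-s)=0$, so that \eqref{eq:eigen_prob} with the potential reads simply $(-\Delta_{g_i}+V_i)u=0$; and the conformal weight $\frac{n-1}{2}$ vanishes, so the substitution $U=\rho^{-\frac{n-1}{2}}u$ of Lemma \ref{lemma_equiv} is the identity.

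First I would perform the reduction. Writing $\overline g_i=\rho_i^2 g_i$ and using the two-dimensional conformal covariance of the Laplacian, $\Delta_{\overline g_i}=\rho_i^{-2}\Delta_{g_i}$, the equation $(-\Delta_{g_i}+V_i)u=0$ on $X$ is equivalent to $(-\Delta_{\overline g_i}+\overline V_i)u=0$ on $\overline X$, where $\overline V_i:=\rho_i^{-2}V_i$ belongs to $C^\infty(\overline X)$ precisely because $V_i\in\rho_i^2C^\infty(\overline X)$. The asymptotic construction of Lemma \ref{lemma:asymp_ex} is unaffected by a potential vanishing to order two, so a solution still has the expansion $u=f+O(\rho)+\rho(-S_i(1)f+O(\rho))$, and Lemma \ref{lemma_equiv} identifies $S_i(1)$ with the Dirichlet-to-Neumann map of $-\Delta_{\overline g_i}+\overline V_i$ on the part of the boundary data for which the Dirichlet problem is solvable. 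The hypothesis equating the Neumann traces $\{\partial_{\rho_i}u|_{\partial\overline X}:u\in\ker_{L^2}(\Delta_{g_i}+V_i)\}$ is exactly what accounts for the Dirichlet kernel: an $L^2$-solution at energy $1$ corresponds after compactification to a solution of $(-\Delta_{\overline g_i}+\overline V_i)u=0$ with $u|_{\partial\overline X}=0$, whose Neumann datum is $\partial_{\rho_i}u|_{\partial\overline X}$. Together, $S_1(1)=S_2(1)$ and this hypothesis give equality of the full Cauchy data sets of the two compactified Schrödinger operators.

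Next I would invoke the uniqueness theorem for the Schrödinger operator on surfaces of \cite{GT11} (in the spirit of \cite{LU01}): equality of the Cauchy data produces a diffeomorphism $\psi\colon\overline X\to\overline X$ fixing $\partial\overline X$ pointwise and a positive $\tilde c\in C^\infty(\overline X)$ with $\tilde c|_{\partial\overline X}=1$ such that $\overline g_1=\tilde c\,\psi^*\overline g_2$ and $\overline V_1=\tilde c^{-1}\psi^*\overline V_2$. This is the natural gauge, since in two dimensions $-\Delta_{\tilde c g}+q=\tilde c^{-1}(-\Delta_g+\tilde c\,q)$ leaves the solution space---and, when $\tilde c|_{\partial\overline X}=1$, the Cauchy data---unchanged. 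Finally I would push these relations back through the compactification. Setting $c:=\tilde c\,\rho_1^{-2}(\psi^*\rho_2)^2$, which is smooth and positive up to the boundary because $\psi^*\rho_2$ and $\rho_1$ are both boundary defining functions, the identity $\rho_1^2g_1=\tilde c\,\psi^*(\rho_2^2g_2)$ yields $g_1=c\,\psi^*g_2$, while $\rho_1^{-2}V_1=\tilde c^{-1}\psi^*(\rho_2^{-2}V_2)$ yields $V_1=c^{-1}\psi^*V_2$, as desired.

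The main obstacle is the reduction step, not the cited surface result: one must check carefully that, in the presence of the order-two potential, the scattering map at energy $1$ is genuinely governed by the Cauchy data of $-\Delta_{\overline g_i}+\overline V_i$, and in particular that the hypothesis on $\{\partial_{\rho_i}u|_{\partial\overline X}:\dots\}$ matches the resonant boundary contributions coming from the $L^2$-kernel (the remark following Theorem \ref{thm:jet} suggests that for surfaces this kernel condition is in fact mild, cf. \cites{MM87,GZ03}). Once the reduction and the identification $S_i(1)=\Lambda_{\overline g_i,\overline V_i}$ are in place, the transformation laws for $g$ and $V$ follow from bookkeeping of conformal factors and require no input beyond the fact that ratios of boundary defining functions are smooth and positive.
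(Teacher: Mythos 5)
Your reduction is exactly the paper's: use that in dimension two $s(n-s)=0$ and the conformal weight $\frac{n-1}{2}$ vanishes, so that after compactification the problem becomes the Calder\'on problem for $-\Delta_{\overline{g}_i}+\overline{V}_i$ with $\overline{V}_i=\rho_i^{-2}V_i\in C^{\infty}(\overline{X})$, with the kernel hypothesis supplying the missing Cauchy data exactly as in \cite{GT11}*{Corollary 6.1}. The bookkeeping of conformal factors at the end is also correct.

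The one substantive problem is the reference you lean on for the crucial uniqueness step. You claim that \cite{GT11} yields, from equality of Cauchy data on a surface, a diffeomorphism $\psi$ and a conformal factor $\tilde c$ with $\overline{g}_1=\tilde c\,\psi^{*}\overline{g}_2$ \emph{and} $\overline{V}_1=\tilde c^{-1}\psi^{*}\overline{V}_2$. That is not what \cite{GT11} proves: its main theorem (and Corollary 6.1) recovers only the potential, with the underlying conformal/complex structure effectively fixed; it does not recover the metric. The whole point of the theorem you are proving --- as the paper states explicitly --- is that the recent result \cite{CLT24}*{Theorem 1.1} upgrades this to simultaneous recovery of the metric and the potential up to the stated gauge, and that is the result one must invoke here. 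So the metric-recovery half of your conclusion is unsupported as written; the argument is repaired simply by replacing the appeal to \cite{GT11} at that step with \cite{CLT24}*{Theorem 1.1}, keeping \cite{GT11}*{Corollary 6.1} only as the template for passing from the scattering data to the Cauchy data of the compactified operators.
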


The proof of \cite{GT11}*{Corollary 6.1} applies mutatis mutandis: it is not hard to see that the DN maps for the operators $\Delta_{\overline{g}_{i}}+\overline{V}_i$ are the same, then \cite{CLT24}*{Theorem 1.1} allows one to recover both the metrics and the potentials with the gauge mentioned in the result. The difference with \cite{GT11}*{Corollary 6.1} lies in the fact that previously only the potential could be recovered, as an application of the main theorem of \cite{GT11}.

\begin{bibdiv} 
\begin{biblist}

\bib{BK25}{article}{
   author={Borthwick, Jack},
   author={Kamran, Niky},
   title={Inverse problems for the Bakry-\'Emery Laplacian on manifolds with boundary - uniqueness and non-uniqueness},
   date={2025},
   eprint={2503.21944},
   status={preprint},
}

\bib{CLT24}{article}{
   author={C\^{a}rstea, C\u{a}t\u{a}lin I.},
   author={Liimatainen, Tony},
   author={Tzou, Leo},
   title={The Calderón problem on Riemannian surfaces and of minimal surfaces},
   date={2024},
   eprint={2406.16944},
   status={preprint},
}

\bib{CG11}{article}{
   author={Chang, Sun-Yung Alice},
   author={Gonz\'alez, Mar\'ia del Mar},
   title={Fractional Laplacian in conformal geometry},
   journal={Adv. Math.},
   volume={226},
   date={2011},
   number={2},
   pages={1410--1432},
   issn={0001-8708},
   review={\MR{2737789}},
   doi={10.1016/j.aim.2010.07.016},
}

\bib{DSFKSU09}{article}{
   author={Dos Santos Ferreira, David},
   author={Kenig, Carlos E.},
   author={Salo, Mikko},
   author={Uhlmann, Gunther},
   title={Limiting Carleman weights and anisotropic inverse problems},
   journal={Invent. Math.},
   volume={178},
   date={2009},
   number={1},
   pages={119--171},
   issn={0020-9910},
   review={\MR{2534094}},
   doi={10.1007/s00222-009-0196-4},
}

\bib{Graham00}{article}{
   author={Graham, C. Robin},
   title={Volume and area renormalizations for conformally compact Einstein
   metrics},
   booktitle={The Proceedings of the 19th Winter School ``Geometry and
   Physics'' (Srn\'i, 1999)},
   journal={Rend. Circ. Mat. Palermo (2) Suppl.},
   number={63},
   date={2000},
   pages={31--42},
   issn={1592-9531},
   review={\MR{1758076}},
}

\bib{GL91}{article}{
   author={Graham, C. Robin},
   author={Lee, John M.},
   title={Einstein metrics with prescribed conformal infinity on the ball},
   journal={Adv. Math.},
   volume={87},
   date={1991},
   number={2},
   pages={186--225},
   issn={0001-8708},
   review={\MR{1112625}},
   doi={10.1016/0001-8708(91)90071-E},
}

\bib{GZ03}{article}{
   author={Graham, C. Robin},
   author={Zworski, Maciej},
   title={Scattering matrix in conformal geometry},
   journal={Invent. Math.},
   volume={152},
   date={2003},
   number={1},
   pages={89--118},
   issn={0020-9910},
   review={\MR{1965361}},
   doi={10.1007/s00222-002-0268-1},
}

\bib{Guillarmou05}{article}{
   author={Guillarmou, Colin},
   title={Meromorphic properties of the resolvent on asymptotically
   hyperbolic manifolds},
   journal={Duke Math. J.},
   volume={129},
   date={2005},
   number={1},
   pages={1--37},
   issn={0012-7094},
   review={\MR{2153454}},
   doi={10.1215/S0012-7094-04-12911-2},
}

\bib{GG07}{article}{
   author={Guillarmou, Colin},
   author={Guillop\'e, Laurent},
   title={The determinant of the Dirichlet-to-Neumann map for surfaces with boundary},
   journal={Int. Math. Res. Not. IMRN},
   date={2007},
   number={22},
   pages={Art. ID rnm099, 26},
   issn={1073-7928},
   review={\MR{2376211}},
   doi={10.1093/imrn/rnm099},
}

\bib{GSB08}{article}{
   author={Guillarmou, Colin},
   author={S\'a{} Barreto, Ant\^onio},
   title={Scattering and inverse scattering on ACH manifolds},
   journal={J. Reine Angew. Math.},
   volume={622},
   date={2008},
   pages={1--55},
   issn={0075-4102},
   review={\MR{2433611}},
   doi={10.1515/CRELLE.2008.064},
}

\bib{GSB09}{article}{
   author={Guillarmou, Colin},
   author={S\'a{} Barreto, Ant\^onio},
   title={Inverse problems for Einstein manifolds},
   journal={Inverse Probl. Imaging},
   volume={3},
   date={2009},
   number={1},
   pages={1--15},
   issn={1930-8337},
   review={\MR{2558301}},
   doi={10.3934/ipi.2009.3.1},
}

\bib{GT11}{article}{
   author={Guillarmou, Colin},
   author={Tzou, Leo},
   title={Calder\'on inverse problem with partial data on Riemann surfaces},
   journal={Duke Math. J.},
   volume={158},
   date={2011},
   number={1},
   pages={83--120},
   issn={0012-7094},
   review={\MR{2794369}},
   doi={10.1215/00127094-1276310},
}

\bib{HSB15}{article}{
   author={Hora, Raphael},
   author={S\'a{} Barreto, Ant\^onio},
   title={Inverse scattering with partial data on asymptotically hyperbolic manifolds},
   journal={Anal. PDE},
   volume={8},
   date={2015},
   number={3},
   pages={513--559},
   issn={2157-5045},
   review={\MR{3353825}},
   doi={10.2140/apde.2015.8.513},
}

\bib{HSB18}{article}{
   author={Hora, Raphael},
   author={S\'a{} Barreto, Ant\^onio},
   title={Inverse scattering with disjoint source and observation sets on
   asymptotically hyperbolic manifolds},
   journal={Comm. Partial Differential Equations},
   volume={43},
   date={2018},
   number={9},
   pages={1363--1376},
   issn={0360-5302},
   review={\MR{3915490}},
   doi={10.1080/03605302.2018.1517793},
}

\bib{Isozaki04}{article}{
   author={Isozaki, Hiroshi},
   title={Inverse spectral problems on hyperbolic manifolds and their
   applications to inverse boundary value problems in Euclidean space},
   journal={Amer. J. Math.},
   volume={126},
   date={2004},
   number={6},
   pages={1261--1313},
   issn={0002-9327},
   review={\MR{2102396}},
}

\bib{IK14}{book}{
   author={Isozaki, Hiroshi},
   author={Kurylev, Yaroslav},
   title={Introduction to spectral theory and inverse problem on
   asymptotically hyperbolic manifolds},
   series={MSJ Memoirs},
   volume={32},
   publisher={Mathematical Society of Japan, Tokyo},
   date={2014},
   pages={viii+251},
   isbn={978-4-86497-021-1},
   review={\MR{3222614}},
   doi={10.1142/e040},
}

\bib{IKL10}{article}{
   author={Isozaki, Hiroshi},
   author={Kurylev, Yaroslav},
   author={Lassas, Matti},
   title={Forward and inverse scattering on manifolds with asymptotically cylindrical ends},
   journal={J. Funct. Anal.},
   volume={258},
   date={2010},
   number={6},
   pages={2060--2118},
   issn={0022-1236},
   review={\MR{2578464}},
   doi={10.1016/j.jfa.2009.11.009},
}

\bib{IKL14}{article}{
   author={Isozaki, Hiroshi},
   author={Kurylev, Yaroslav},
   author={Lassas, Matti},
   title={Recent progress of inverse scattering theory on non-compact
   manifolds},
   conference={
      title={Inverse problems and applications},
   },
   book={
      series={Contemp. Math.},
      volume={615},
      publisher={Amer. Math. Soc., Providence, RI},
   },
   isbn={978-1-4704-1079-7},
   date={2014},
   pages={143--163},
   review={\MR{3221603}},
   doi={10.1090/conm/615/12290},
}

\bib{IKL15}{article}{
   author={Isozaki, Hiroshi},
   author={Kurylev, Yaroslav},
   author={Lassas, Matti},
   title={Inverse scattering on multi-dimensional asymptotically hyperbolic orbifolds},
   conference={
      title={Spectral theory and partial differential equations},
   },
   book={
      series={Contemp. Math.},
      volume={640},
      publisher={Amer. Math. Soc., Providence, RI},
   },
   isbn={978-1-4704-0989-0},
   date={2015},
   pages={71--85},
   review={\MR{3381017}},
   doi={10.1090/conm/640/12840},
}

\bib{IKL17}{article}{
   author={Isozaki, Hiroshi},
   author={Kurylev, Yaroslav},
   author={Lassas, Matti},
   title={Conic singularities, generalized scattering matrix, and inverse
   scattering on asymptotically hyperbolic surfaces},
   journal={J. Reine Angew. Math.},
   volume={724},
   date={2017},
   pages={53--103},
   issn={0075-4102},
   review={\MR{3619104}},
   doi={10.1515/crelle-2014-0076},
}

\bib{JSB00}{article}{
   author={Joshi, Mark S.},
   author={S\'a{} Barreto, Ant\^onio},
   title={Inverse scattering on asymptotically hyperbolic manifolds},
   journal={Acta Math.},
   volume={184},
   date={2000},
   number={1},
   pages={41--86},
   issn={0001-5962},
   review={\MR{1756569}},
   doi={10.1007/BF02392781},
}

\bib{LLS22}{article}{
   author={Lassas, Matti},
   author={Liimatainen, Tony},
   author={Salo, Mikko},
   title={The Calder\'on problem for the conformal Laplacian},
   journal={Comm. Anal. Geom.},
   volume={30},
   date={2022},
   number={5},
   pages={1121--1184},
   issn={1019-8385},
   review={\MR{4564032}},
   doi={10.4310/cag.2022.v30.n5.a6},
}

\bib{LU01}{article}{
   author={Lassas, Matti},
   author={Uhlmann, Gunther},
   title={On determining a Riemannian manifold from the Dirichlet-to-Neumann map},
   language={English, with English and French summaries},
   journal={Ann. Sci. \'Ecole Norm. Sup. (4)},
   volume={34},
   date={2001},
   number={5},
   pages={771--787},
   issn={0012-9593},
   review={\MR{1862026}},
   doi={10.1016/S0012-9593(01)01076-X},
}

\bib{LU89}{article}{
   author={Lee, John M.},
   author={Uhlmann, Gunther},
   title={Determining anisotropic real-analytic conductivities by boundary
   measurements},
   journal={Comm. Pure Appl. Math.},
   volume={42},
   date={1989},
   number={8},
   pages={1097--1112},
   issn={0010-3640},
   review={\MR{1029119}},
   doi={10.1002/cpa.3160420804},
}

\bib{Marazzi09}{article}{
   author={Marazzi, Leonardo},
   title={Inverse scattering on conformally compact manifolds},
   journal={Inverse Probl. Imaging},
   volume={3},
   date={2009},
   number={3},
   pages={537--550},
   issn={1930-8337},
   review={\MR{2557918}},
   doi={10.3934/ipi.2009.3.537},
}

\bib{Marazzi11}{article}{
   author={Marazzi, Leonardo},
   title={Asymptotically hyperbolic manifolds with polyhomogeneous metric},
   journal={Differential Integral Equations},
   volume={24},
   date={2011},
   number={9-10},
   pages={973--1000},
   issn={0893-4983},
   review={\MR{2850349}},
}

\bib{Marx-Kuo24}{article}{
   author={Marx-Kuo, Jared},
   title={An Inverse Problem for Renormalized Area: Determining the Bulk Metric with Minimal Surfaces},
   date={2024},
   eprint={2401.07394},
   status={preprint},
}

\bib{Mazzeo91_ell}{article}{
   author={Mazzeo, Rafe},
   title={Elliptic theory of differential edge operators. I},
   journal={Comm. Partial Differential Equations},
   volume={16},
   date={1991},
   number={10},
   pages={1615--1664},
   issn={0360-5302},
   review={\MR{1133743}},
   doi={10.1080/03605309108820815},
}

\bib{Mazzeo91_uni}{article}{
   author={Mazzeo, Rafe},
   title={Unique continuation at infinity and embedded eigenvalues for
   asymptotically hyperbolic manifolds},
   journal={Amer. J. Math.},
   volume={113},
   date={1991},
   number={1},
   pages={25--45},
   issn={0002-9327},
   review={\MR{1087800}},
   doi={10.2307/2374820},
}

\bib{MM87}{article}{
   author={Mazzeo, Rafe R.},
   author={Melrose, Richard B.},
   title={Meromorphic extension of the resolvent on complete spaces with
   asymptotically constant negative curvature},
   journal={J. Funct. Anal.},
   volume={75},
   date={1987},
   number={2},
   pages={260--310},
   issn={0022-1236},
   review={\MR{0916753}},
   doi={10.1016/0022-1236(87)90097-8},
}

\bib{NSU95}{article}{
   author={Nakamura, Gen},
   author={Sun, Zi Qi},
   author={Uhlmann, Gunther},
   title={Global identifiability for an inverse problem for the
   Schr\"odinger equation in a magnetic field},
   journal={Math. Ann.},
   volume={303},
   date={1995},
   number={3},
   pages={377--388},
   issn={0025-5831},
   review={\MR{1354996}},
   doi={10.1007/BF01460996},
}

\bib{SB05}{article}{
   author={S\'a{} Barreto, Ant\^onio},
   title={Radiation fields, scattering, and inverse scattering on
   asymptotically hyperbolic manifolds},
   journal={Duke Math. J.},
   volume={129},
   date={2005},
   number={3},
   pages={407--480},
   issn={0012-7094},
   review={\MR{2169870}},
   doi={10.1215/S0012-7094-05-12931-3},
}

\bib{SU88}{article}{
   author={Sylvester, John},
   author={Uhlmann, Gunther},
   title={Inverse boundary value problems at the boundary---continuous
   dependence},
   journal={Comm. Pure Appl. Math.},
   volume={41},
   date={1988},
   number={2},
   pages={197--219},
   issn={0010-3640},
   review={\MR{0924684}},
   doi={10.1002/cpa.3160410205},
}

\end{biblist}
\end{bibdiv}

\end{document}